\author{Jelena~Jovanovi\'c\\ \emph{University of Belgrade, Faculty of Mathematics }}
\title{On terms describing omitting unary and affine types -- improved}
\date{}
\theoremstyle{plain}
\newtheorem{teorema}{Theorem}[section]
\newtheorem{definicija}[teorema]{Definition}
\newtheorem{tvrdjenje}[teorema]{Proposition}
\newtheorem{fakt}[teorema]{Fact}
\begin{document}
\maketitle
\begin{abstract}\footnote{2010 Mathematics Subject Classification: Primary 08B05;\\Keywords: Variety, Hobby-McKenzie types, Omitting types;}
In this paper we examine the possibility of describing omitting types 1 and 2 by two at most ternary terms and any number of linear identities. All possible cases of systems of linear identities on two at most ternary terms are being analyzed, and it is shown that only a single one of these systems might describe omitting types 1 and 2. However, we do not resolve whether it actually describes omitting mentioned types, but only prove that it implies this property, so this question is left for further examination.
\end{abstract}

\section{Introduction}

This paper deals with syntactical conditions equivalent to congruence meet-semidistributivity of a locally finite variety of algebras. We first endeavor to explain the significance of the concept under study in this paper. Congruence meet-semidistributivity of a locally finite variety has already been characterized by D. Hobby and R. McKenzie (\cite{hm}) as equivalent to having no covers of types {\bf 1} or {\bf 2} in the congruence lattices of finite algebras in the variety, and also by a syntactical Mal'cev condition due to R. Willard (\cite{rossw}). It is a condition which has seen broad use, as much of the theory which holds true for congruence distributive varieties, also holds in congruence meet-semidistributive case, though the proofs are often much harder. To mention some better-known examples, Park's Conjecture (\cite{park}) was proved to hold in congruence meet-semidistributive case by Willard (\cite{rossw}), extending the congruence distributive proof due to Baker (\cite{baker}), while Quackenbush's Conjecture (\cite{quack}), which holds trivially in congruence distributive case due to J\'onsson's Lemma (\cite{bjarni}), was proved in the congruence meet-semidistributive case by Kearnes and Willard (\cite{kearnesross}). Recently, the research in the Constraint Satisfaction Problem has shown that congruence meet-semidistributivity of the variety generated by the algebra of compatible operations is equivalent to the condition that the efficient algorithm called 'localconsistency checking' to faithfully solve the Constraint Satisfaction Problem. The property 'local consistency checking works here' is called 'bounded width' by the researchers working in Constraint Satisfaction Problem, and the reader can find a wealth of literature on the concept. This result due to L. Barto and M. Kozik (\cite{bartokozik}) is certainly among the strongest known partial results for the Dichotomy Conjecture, and probably the hardest to be proven so far.

Characterization of various semantical properties of all algebras and/or their congruence lattices in a variety by equivalent syntactical conditions was started by A. I. Mal'cev in (\cite{malcev}). Because of that, the properties which can be so characterized are called Mal'cev properties, and the syntactical conditions equivalent to them Mal'cev conditions. There is a subclass of the {\em strong} Mal'cev conditions saying that a property is equivalent to all algebras in the variety having a fixed number of term operations of fixed arities and which satisfy a fixed package of equations (like the original Mal'cev condition for congruence permutability). On the other hand, a usual Mal'cev property is equivalent to satisfying one of (equivalently, all but finitely many of) a countably infinit sequence of strong Mal'cev conditions in which each member implies the next one (meaning that the properties are increasingly more general), like in the case of J\'onsson's condition for congruence distributivity (\cite{bjarni}). Obviously, strong Mal'cev conditions are preferable, if available, as then we can use the operations in a computer search, but there are properties which are Mal'cev properties but are proved to not be strong Mal'cev properties. The condition most commonly used for congruence meet-semidistributivity of a variety (not necessarily locally finite), until recently, was the one proved by Willard (\cite{rossw}), but the research in the Constraint Satisfaction Problem has recently uncovered that the congruence meet-semidistributivity of a locally finite variety is a strong Mal'cev property. The best, i. e. syntactically strongest we know of is due to M. Kozik (\cite{hhhm}). In this paper we try to see if it is also the best possible. We identify three candidate conditions, all of which imply congruence meet-semidistributivity and are syntactically stronger and with fewer operations and/or of smaller arity than the condition proved in \cite{hhhm}. We prove that either one of the three conditions we found is indeed equivalent to congruence meet-semidistributivity, or the condition from \cite{hhhm} is the best possible. Which of these alternatives is true we have not managed to ascertain and leave open for future research.

\section{Background}

In this paper an {\em algebra} denotes a structure ${\bf A}=(A,F^{\bf A})$, where $F$ is a signature, or language, consisting only of operation symbols of various arities, $A$ is a nonempty set, and for each symbol $f\in F$ of arity $k$ the corresponding element $f^{\bf A}\in F^{\bf A}$ is a mapping $f^{\bf A}:A^k\rightarrow A$. The set of {\em term operations} of ${\bf A}$ is the set of all operations obtained from $F^{\bf A}$ and projection operations via finitely many compositions. All algebras of the same signature which identically satisfy a set of equations are called a variety. An algebra ${\bf A}$ is locally finite if for any finite subset $X$ of $A$, the set of all results of term operations applied to elements of $X$ is also finite. A variety is locally finite if every algebra in it is.

There is a natural connection between operations and relations on the same set. It says that a ($k$-ary) relation and an ($n$-ary) operation are compatible if for any $n$ vectors from the relation, the vector obtained by pointwise application of the operation is again in the relation. The classic results of universal algebra often connect the properties of the compatible equivalence relations, which form a lattice under inclusion called the congruence lattice, and other properties of algebras. In this paper we investigate the meet-semidistributivity of the congruence lattices of all algebras in a variety, which is the lattice implication $x\wedge z = y\wedge z \Rightarrow (x\vee y)\wedge z = x\wedge z$. An equivalent condition of the congruence meet-semidistributivity of a locally finite variety is omitting types of covers ${\bf 1}$ and ${\bf 2}$ in finite algebras of the variety. This concept will be explained further in the text. For any other definitions and basic results which are not found in this introductory part, the reader is referred to \cite{hhhhm} for basic universal algebra and \cite{hm} for tame congruence theory.

\begin{definicija}
Let $\mathbf{A}$ be a finite algebra and $\alpha$ a minimal congruence of $\mathbf{A}$ (i.e. $0_{\mathbf{A}} <  \alpha$ and if $\beta$ is a congruence of $\mathbf{A}$ with $0_{\mathbf{A}} <  \beta \le \alpha $ then $\beta = \alpha$.)
\begin{itemize}
\item An $\alpha$--minimal set of $\mathbf{A}$ is a subset $U$ of $\mathbf{A}$ that satisfies following two conditions:
\begin{itemize}
\item[-]$U = p(\mathbf{A})$ for some unary polynomial $p(x)$ of $\mathbf{A}$ that is not constant on at least one $\alpha$--class
\item[-] with respect to containment, $U$ is minimal having this property.
\end{itemize}
\item An $\alpha$--neighbourhood (or $\alpha$--trace) of $\mathbf{A}$ is a subset $N$ of $\mathbf{A}$ such that:
\begin{itemize}
\item[-] $N = U \cap (a /_{\alpha})$ for some $\alpha$--minimal set $U$ and $\alpha$--class $a /_{\alpha}$
\item[-] $\vert N \vert > 1$.
\end{itemize}
\end{itemize}
\end{definicija}
We can easily see that a given $\alpha$--minimal set $U$ must contain at least one, and possibly more, $\alpha$--neighbourhoods.The union of all $\alpha$--neighbourhoods in $U$ is called the body of $U$, and the remaining elements of $U$ form the tail of $U$. What is important here is that algebra $\mathbf{A}$ induces uniform structures on all its $\alpha$--neighbourhoods, meaning they (the structures induced) all belong to the same of five possible types. Let us now define an induced structure.
\begin{definicija}
Let $\mathbf{A}$ be an algebra and $U \subseteq \mathbf{A}$. The algebra induced by $\mathbf{A}$ on $U$ is the algebra with universe $U$ whose basic operations consist of the restriction to $U$ of all polynomials of $\mathbf{A}$  under which $U$ is closed. We denote this induced algebra by $\mathbf{A} \vert_{U}$.
\end{definicija}
\begin{teorema}
Let $\mathbf{A}$ be a finite algebra and $\alpha$ a minimal congruence of $\mathbf{A}$.
\begin{itemize}
\item If $U$ and $V$ are $\alpha$--minimal sets then $\mathbf{A} \vert_{U}$ and $\mathbf{A} \vert_{V}$ are isomorphic and in fact there is a polynomial $p(x)$ that maps $U$ bijectively onto $V$.
\item If $N$ and $M$ are $\alpha$--neighbourhoods then $\mathbf{A} \vert_{N}$ and $\mathbf{A} \vert_{M}$ are isomorphic via the restriction of some polynomial of $\mathbf{A}$.
\item If $N$ is $\alpha$--neighbourhood then $\mathbf{A} \vert_{N}$ is polynomially equivalent to one of:
\begin{enumerate}
\item A unary algebra whose basic operations are all permutations (unary type);
\item A one--dimensional vector space over some finite field (affine type);
\item A $2$--element boolean algebra (boolean type);
\item A $2$--element lattice (lattice type);
\item A $2$--element semilattice (semilattice type);
\end{enumerate}
\end{itemize}
\end{teorema}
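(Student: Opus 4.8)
The plan is to rest the whole argument on the single structural hypothesis that $\alpha$ is an atom of the congruence lattice, since this is what forces unary polynomials to behave dichotomously on $\alpha$-related pairs. Call a unary polynomial \emph{separating} if it is nonconstant on some $\alpha$-class; then an $\alpha$-minimal set is exactly a $\subseteq$-minimal range $f(\mathbf{A})$ of a separating polynomial. First I would establish an \textbf{idempotent normal form}: every $\alpha$-minimal set $U$ equals $e(\mathbf{A})$ for some idempotent separating polynomial $e$ (so $e\circ e = e$). Because $\mathbf{A}$ is finite, the cyclic semigroup generated by any $f$ with $U=f(\mathbf{A})$ contains an idempotent power $e=f^{t}$; the atom property together with minimality of $U$ shows $e$ still separates $\alpha$ and that $e(\mathbf{A})=U$. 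This is the first place the atom hypothesis does real work, and it yields the key lemma (again a consequence of the atom property and minimality) that \emph{any polynomial separating $\alpha$ restricts to a bijection between minimal sets}.

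For Part 1, take two $\alpha$-minimal sets $U=e(\mathbf{A})$ and $V=d(\mathbf{A})$ with $e,d$ idempotent and separating. Since $e$ is idempotent with range $U$, it acts as the identity on $U$, so $d|_{U}\colon U\to d(U)$ and $e|_{V}\colon V\to e(V)$ are polynomial maps. One checks $d(U)=(d\circ e)(\mathbf{A})$ is the range of a separating polynomial contained in $V$, hence equals $V$ by minimality; by the key lemma $d|_{U}$ is moreover injective, so $p:=d$ maps $U$ bijectively onto $V$. Symmetrically $e|_{V}$ is the inverse bijection, and transport of structure along $p$ sends each polynomial of $\mathbf{A}$ preserving $U$ to one preserving $V$, giving $\mathbf{A}|_{U}\cong\mathbf{A}|_{V}$.

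Part 2 is then a corollary. The bijection $p$ of Part 1 commutes with $\alpha$, so it carries the trace $N=U\cap(a/\alpha)$ onto $V\cap(p(a)/\alpha)$, a trace of $V$; composing $p$ with a transition polynomial inside a single minimal set that moves one $\alpha$-class to another (constructed the same way from a separating polynomial) lets one realise any two $\alpha$-traces $N,M$ as images of one another, so the restriction of a suitable polynomial is the required isomorphism $\mathbf{A}|_{N}\cong\mathbf{A}|_{M}$.

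Part 3 is the deep part and where I expect the genuine obstacle. The strategy is to prove that the trace algebra $\mathbf{B}:=\mathbf{A}|_{N}$ is a \emph{minimal algebra}: it is simple (its only congruences are $0_{\mathbf{B}}$ and $1_{\mathbf{B}}$, because $\alpha$ is an atom and $N$ is a trace), and every unary polynomial of $\mathbf{B}$ is either constant or a permutation (the atom property once more dichotomises unary behaviour on $N$). With that reduction in hand the classification splits on $|N|$: when $|N|=2$, a direct analysis of the clone of a two-element algebra via Post's lattice yields precisely the boolean, lattice, semilattice, affine ($\mathbb{Z}_{2}$-space) or unary possibilities; when $|N|\ge 3$, I would invoke P\'alfy's theorem, which says that a finite algebra with at least three elements all of whose nonconstant unary polynomials are permutations is polynomially equivalent either to a unary algebra of permutations or to a one-dimensional vector space over a finite field. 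The hard steps are verifying simplicity and, above all, the constant-or-permutation dichotomy for $\mathbf{B}$ cleanly from the atom property, and then the appeal to P\'alfy's theorem, which is itself a substantial structure result; establishing that dichotomy is the crux on which the entire five-type classification turns.
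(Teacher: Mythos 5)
Note first that the paper does not actually prove this theorem: its ``proof'' is a pointer to the literature (the statement is quoted from Bulatov--Valeriote and the proof delegated to \cite{hm}), so your proposal has to be measured against the Hobby--McKenzie development that it is implicitly reconstructing. At the level of architecture you reconstruct it faithfully: idempotent polynomials with minimal range, polynomial isomorphisms between minimal sets and then between traces, reduction of Part 3 to the statement that a trace algebra is a simple minimal algebra, P\'alfy's theorem for $\vert N\vert\ge 3$, and the classification of two-element clones for $\vert N\vert=2$. One attribution slip: P\'alfy's theorem yields a vector space of \emph{unspecified} finite dimension (higher-dimensional vector spaces are themselves minimal algebras, e.g.\ $\mathrm{GF}(2)^2$), and it is the simplicity of the trace algebra that forces dimension one; so the simplicity step you list is doing more work than you credit it with.

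The genuine gap is your ``key lemma,'' and it sits exactly where all the real work of Parts 1 and 2 is done. The statement \emph{any polynomial separating $\alpha$ restricts to a bijection between minimal sets} is false, and Part 1, which takes $p:=d$, fails with it. Counterexample: let $A=\{0,1,2,3\}$, let $\alpha$ have blocks $\{0,1\}$ and $\{2,3\}$, and let the basic operations be the unary maps $e_U\colon(0,1,2,3)\mapsto(0,1,0,1)$, $e_V\colon(0,1,2,3)\mapsto(2,2,2,3)$, and $g\colon(0,1,2,3)\mapsto(2,3,2,3)$. Then $\alpha$ is a congruence and an atom (from $(0,1)$ the polynomial $g$ produces $(2,3)$, and from $(2,3)$ the polynomial $e_U$ produces $(0,1)$, so every nonzero congruence below $\alpha$ is all of $\alpha$); the sets $U=\{0,1\}=e_U(A)$ and $V=\{2,3\}=e_V(A)$ are $\alpha$--minimal; $e_V$ is idempotent and separating; yet $e_V$ is constant on $U$. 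So with $e=e_U$ and $d=e_V$, your claims that $(d\circ e)(\mathbf{A})$ is the range of a separating polynomial and that $d\vert_U$ is a bijection onto $V$ both fail: $d\circ e$ is the constant map with value $2$, and the actual bijection $U\to V$ is $g\vert_U$, a polynomial your argument has no means of producing. The correct lemma requires $f$ to be nonconstant on $\alpha\vert_U$, not merely separating somewhere in $\mathbf{A}$, and producing a polynomial that is nonconstant on $\alpha\vert_U$ with range $V$ is precisely the missing step: since $\alpha$ is an atom, $\alpha=\mathrm{Cg}(a,b)$ for any pair $a\ne b$ in $\alpha\vert_U$, so Mal'cev's description of principal congruences connects any pair that $d$ separates to $(a,b)$ through unary polynomial images of $\{a,b\}$, whence some composite $d\circ h\circ e$ (not $d\circ e$) is nonconstant on $\alpha\vert_U$; one must then still prove that such a polynomial is injective on $U$. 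This chain argument is absent from your sketch, and the same repair is needed for the ``transition polynomial'' between traces in Part 2 and for the simplicity and constant-or-permutation claims in Part 3, all of which rest on it.
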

\begin{proof}
The theorem in this form is given in \cite{nm}, and the proof can be found in \cite{hm}.
\end{proof}
The previous theorem allows us to assign a type to each minimal congruence $\alpha$ of an algebra according to the behaviour of the $\alpha$--neighbourhoods (for example, a minimal congruence whose $\alpha$--neighbourhoods are polynomially equivalent to a vector space is said to have affine type or type $2$).

Taking this idea one step further, given a pair of congruences $(\alpha, \beta)$ of $\mathbf{A}$ with $\beta$ covering $\alpha$ (i.e. $\alpha < \beta$ and there are no congruences of $\mathbf{A}$ strictly between the two), one can form the quotient algebra $\mathbf{A}/_{\alpha}$, and then consider the congruence $\beta /_{\alpha} = \{(a/_{\alpha}, b/_{\alpha}):(a,b) \in \beta \}$. Since $\beta$ covers $\alpha$ in the congruence lattice of $\mathbf{A}$,  $\beta /_{\alpha}$ is a minimal congruence of $\mathbf{A}/_{\alpha}$, so it can be assigned one of the five types. In this way we can assign to each covering pair of congruences of $\mathbf{A}$ a type (unary, affine, boolean, lattice, semilattice, or 1, 2, 3, 4, 5 respectively). Therefore, going through all covering pairs of congruences of this algebra we obtain a set of types, so--called typeset of $\mathbf{A}$, denoted by $typ \{\mathbf{A}\}$. Also, for $\mathcal{K}$  a class of algebras, the typeset of $\mathcal{K}$ is defined to be the union of all the typesets of its finite members, denoted by $typ \{\mathcal{K} \}$.

A finite algebra or a class of algebras is said to omit a certain type if that type does not appear in its typeset. For locally finite varieties omitting certain types can be characterized by Maltsev conditions, i.e. by the existence of certain terms that satisfy certain linear identities, and there are quite a few results on this so far. We shall present two of them concerning omitting types 1 and 2.
\begin{definicija}
An n--ary term $t$, for $n > 1$, is a near--unanimity term for an algebra $\mathbf{A}$ if the identities $t(x,x,\dots ,x,y) \approx t(x,x,\dots ,y,x) \approx  \dots  \approx t(x,y,\dots ,x,x) \approx t(y,x,\dots ,x,x) \approx x $ hold in $\mathbf{A}$.
\end{definicija}
\begin{definicija}
An n--ary term $t$, for $n > 1$, is a weak near--unanimity term for an algebra $\mathbf{A}$ if it is idempotent and the identities $t(x,x,\dots ,x,y) \approx t(x,x,\dots ,y,x) \approx  \dots  \approx t(x,y,\dots ,x,x) \approx t(y,x,\dots ,x,x)$ hold in $\mathbf{A}$.
\end{definicija}
\begin{teorema}

A locally finite variety $\mathcal{V}$ omits the unary and affine types (i.e. types 1 and 2) if and only if there is some $N > 0$ such that for all $k > N$, $\mathcal{V}$ has a weak near--unanimity term of arity $k$.\label{th}

\end{teorema}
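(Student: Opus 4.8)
The plan is to prove the two implications by entirely different means. The implication from the term condition to omitting types 1 and 2 is the more elementary half, and I would establish it by contraposition using the tame-congruence structure theorem recalled above. So suppose $\mathcal{V}$ admits type 1 or type 2; then there is a finite $\mathbf{A}\in\mathcal{V}$, a minimal congruence $\alpha$, and an $\alpha$--trace $N$ with $\vert N\vert>1$ such that $\mathbf{A}\vert_N$ is polynomially equivalent either to a unary algebra of permutations (type 1) or to a one--dimensional vector space over a finite field $\mathbb{F}$ (type 2). The crucial observation is that any $k$--ary weak near--unanimity term $t$ of $\mathcal{V}$, after composition with the idempotent unary polynomial retracting $\mathbf{A}$ onto the relevant minimal set, induces an idempotent polynomial operation of $\mathbf{A}\vert_N$ that inherits the weak near--unanimity symmetry. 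I would then show no such operation exists on a nontrivial trace of either type, for all large $k$.

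For type 1 the polynomial clone of $\mathbf{A}\vert_N$ consists of essentially unary operations, so an idempotent symmetric operation is forced to be a projection; but a projection cannot satisfy $t(y,x,\dots,x)\approx t(x,y,\dots,x)$ once $\vert N\vert>1$, which rules out weak near--unanimity terms of every arity $\ge 2$. For type 2 the induced polynomials are exactly the affine maps over $\mathbb{F}$, so $t$ restricts to $x\mapsto\sum_i a_i x_i$ with $\sum_i a_i=1$, and the symmetry forces all $a_i$ equal to a single $a$, whence $ka=1$ in $\mathbb{F}$. If $p$ is the characteristic of $\mathbb{F}$, this is impossible whenever $p\mid k$; since infinitely many arities are divisible by $p$, no threshold $N$ can bound the arities carrying weak near--unanimity terms. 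Hence the term condition forces omission of both types.

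For the converse I would first descend from the variety to its finite members: by the standard correspondence between linear idempotent Mal'cev conditions and their satisfaction in finite algebras (where local finiteness is essential), it suffices to produce, uniformly across the finite algebras of $\mathcal{V}$, a $k$--ary weak near--unanimity term operation, and a further routine reduction lets us take these algebras idempotent. Now omitting type 1 alone already makes $\mathcal{V}$ a Taylor variety, so by the theory of Taylor algebras each finite idempotent $\mathbf{A}\in\mathcal{V}$ carries cyclic---hence weak near--unanimity---term operations of every prime arity exceeding $\vert A\vert$. The remaining task is to promote ``all large primes'' to ``all large integers,'' and this is precisely where omitting type 2 must be invoked: the sole obstruction to assembling a weak near--unanimity operation of a prescribed composite arity from ones of prime arity is affine behaviour modulo the characteristic---exactly the phenomenon exploited in the first half---so its absence should allow a composition and amalgamation argument to realize every sufficiently large $k$.

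I expect the genuine difficulty to lie in this bridging step together with the demand for uniformity. Exhibiting a single weak near--unanimity operation in a fixed finite algebra is comparatively soft, but welding these into honest terms of $\mathcal{V}$ valid in all its algebras simultaneously, while controlling the arity so that \emph{every} $k>N$ is attained rather than a merely sparse set of arities, is delicate. It is here that the full force of congruence meet--semidistributivity, as opposed to the mere omission of type 1, becomes indispensable, and where I would anticipate spending the bulk of the effort.
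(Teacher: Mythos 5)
First, a point of comparison: the paper does not prove this theorem at all --- it is quoted as known, with the proof deferred to Mar\'oti and McKenzie \cite{hhm}. So your proposal has to be measured against that cited proof, and it falls short of one.

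Your first half is correct and is essentially the standard argument. After passing to a quotient so that the offending covering pair becomes $(0,\alpha)$ with $\alpha$ minimal, composing a $k$-ary weak near-unanimity term $t$ with the idempotent polynomial whose range is the minimal set, and observing that $t(N,\dots,N)$ stays inside the relevant $\alpha$-class, does give an idempotent polynomial of $\mathbf{A}\vert_N$ satisfying the same identities; the projection argument in type $\mathbf{1}$ and the coefficient argument $ka=1$ in type $\mathbf{2}$ then run exactly as you say. This direction is fine.

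The converse, which is the entire substance of the theorem, has a genuine gap. First, you invoke the Barto--Kozik cyclic-terms theorem (cyclic term operations of every prime arity $p>\vert A\vert$ in finite idempotent Taylor algebras); that theorem postdates \cite{hhm}, is considerably deeper, and its known proofs themselves rely on the Mar\'oti--McKenzie existence results, so invoking it here risks circularity. Second, and more seriously, even granting cyclic terms of all large prime arities, your ``composition and amalgamation'' step cannot deliver all large arities. Composing weak near-unanimity operations of arities $p$ and $q$ does yield one of arity $pq$, but then everything you can build has only prime factors exceeding $\vert A\vert$: every large integer with a small prime factor --- for instance every even integer --- remains out of reach. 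Nor is this a removable inconvenience: the full idempotent reduct of a vector space over $\mathbb{Z}_p$ has idempotent cyclic (hence weak near-unanimity) term operations of every arity coprime to $p$, in particular of every large prime arity, so prime-arity information is in principle incapable of detecting type $\mathbf{2}$. The actual content of the theorem is the construction of weak near-unanimity terms of arities divisible by the small primes, and for this your sketch offers no mechanism; Mar\'oti and McKenzie need a genuinely different and substantially harder construction at exactly this point, exploiting the absence of abelian behaviour in the finite algebras of the variety. Finally, the ``uniformity'' issue you single out as the main difficulty is not one: by local finiteness and freeness, a $k$-ary weak near-unanimity term operation of the finite free algebra $\mathbf{F}_{\mathcal{V}}(2)$ comes from a term of $\mathcal{V}$, and identities holding at the free generators hold identically throughout $\mathcal{V}$.
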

\begin{proof} The proof can be found in \cite{hhm}.
\end{proof}
\begin{teorema}
A locally finite variety $\mathcal{V}$ omits the unary and affine types if and only if it has 3--ary and 4--ary weak near--unanimity terms, $v$ and $w$ respectively, that satisfy the identity $v(y,x,x) \approx w(y,x,x,x)$.
\end{teorema}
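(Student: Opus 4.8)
The plan is to prove both implications by routing them through Theorem~\ref{th}, which already equates omitting types \textbf{1} and \textbf{2} with the presence of weak near--unanimity terms in all sufficiently large arities. Thus the real content is to show that the two--term condition on $v$ and $w$ is \emph{equivalent} to the availability of weak near--unanimity terms of all large arities. Before starting I would record two conventions. First, every weak near--unanimity term is idempotent by definition, so all terms we substitute into one another stay idempotent and every ``diagonal'' evaluation $t(x,\dots,x)$ returns $x$. Second, to an $n$--ary weak near--unanimity term $t$ I attach its binary \emph{trace} $\bar t(x,y):=t(y,x,\dots,x)$, which by the weak near--unanimity identities is independent of the position of the single $y$; with this notation the linking identity $v(y,x,x)\approx w(y,x,x,x)$ says exactly that $\bar v=\bar w$.

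For the direction that assumes $v$ and $w$ are given, I would manufacture weak near--unanimity terms of a cofinal set of arities (in fact of every arity $n\ge 3$) by composing $v$ and $w$, beginning from $w_3:=v$ and $w_4:=w$, and then quote Theorem~\ref{th}. Each composite is built so that in any ``one--off'' evaluation, where all arguments but one are set equal, its inner subterms collapse either to the repeated variable (by idempotence) or to a trace; the point of the shared trace $\bar v=\bar w$ is that the ``boundary'' subterms of the composition then agree, so the composite satisfies \emph{all} the weak near--unanimity identities rather than only some of them. Verifying that each newly produced term really is a weak near--unanimity term -- i.e. that every one--off evaluation reduces to one value independent of the position of the odd argument -- is the technical heart of this direction, and the linking identity is precisely the hypothesis that makes the verification go through.

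For the converse I would start from the conclusion of Theorem~\ref{th}: some $N$ with weak near--unanimity terms of all arities exceeding $N$, and from these produce specifically a ternary and a quaternary one sharing a trace. The basic tool is the ``grid'' composition: for an $m$--ary $f$ and an $n$--ary $g$, put the variables in an $m\times n$ array and set $(f\star g)(x_{11},\dots,x_{mn})=f\bigl(g(x_{11},\dots,x_{1n}),\dots,g(x_{m1},\dots,x_{mn})\bigr)$. If $f,g$ are idempotent weak near--unanimity terms then so is $f\star g$, of arity $mn$, and a short computation gives $\overline{f\star g}(x,y)=\bar f\bigl(x,\bar g(x,y)\bigr)$. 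Products of arities together with this trace calculus let me reach arities $3$ and $4$; the remaining and genuinely delicate task is to force the two traces to coincide so that the linking identity holds.

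I expect the main obstacle to be exactly this control of the binary traces, which dominates the second direction. Composition multiplies arities freely but only \emph{composes} traces, and two independently chosen weak near--unanimity terms have no reason to share one. To overcome this I would exploit local finiteness together with the fact that types \textbf{1} and \textbf{2} are \emph{already} omitted in this direction: the traces are idempotent binary operations on each finite algebra, iterating the trace composition $\bar f\bullet\bar g\,(x,y)=\bar f(x,\bar g(x,y))$ must stabilize on each finite member, and the structure theory forbidding type--\textbf{1} and type--\textbf{2} neighbourhoods should prevent the stable traces from splitting into incompatible ``semilattice--like'' behaviours, yielding a single canonical trace to which both the ternary and the quaternary term can be normalized. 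Making this stabilization uniform across the variety while simultaneously preserving the weak near--unanimity identities of the underlying terms is where I expect the argument to demand the most care.
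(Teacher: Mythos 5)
Your proposal has genuine gaps in both directions, and it is worth noting at the outset that the paper itself offers no proof to compare against: it simply cites \cite{hhhm}, where this equivalence is the main (and difficult) result. In the direction ``$v,w$ exist $\Rightarrow$ types \textbf{1}, \textbf{2} omitted,'' your plan is to compose $v$ and $w$ into weak near--unanimity terms of ``every arity $n\ge 3$'' and then quote Theorem~\ref{th}. The grid composition $f\star g$ is indeed a weak near--unanimity term, but its arity is the \emph{product} $mn$, so starting from arities $3$ and $4$ you only ever reach arities of the form $3^a4^b$; this set misses every multiple of $5$, hence you never obtain terms of \emph{all} arities beyond some $N$, which is what Theorem~\ref{th} requires. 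The gap is not cosmetic: the full idempotent reduct of a module over $\mathbb{Z}_5$ has weak near--unanimity terms of every arity coprime to $5$ (namely $a(x_1+\dots+x_n)$ with $na=1$ in $\mathbb{Z}_5$), in particular of all arities $3^a4^b$, yet it realizes type \textbf{2}; so producing a cofinal set of arities can never suffice, and the linking identity must be used in an essentially different way. The standard short argument -- and the one consistent with the technique this paper uses everywhere (its Fact in subsection~\ref{singleternary}, and the proofs for systems \eqref{system2} and \eqref{system3}) -- is: in any nontrivial full idempotent reduct of a module over a finite ring, the weak near--unanimity identities force $v=a(x+y+z)$ with $3a=1$ and $w=d(x+y+z+u)$ with $4d=1$, and then $v(y,x,x)\approx w(y,x,x,x)$ forces $a=d$, hence $a=2a$, hence $a=0$, contradicting $3a=1$; one then invokes the Hobby--McKenzie characterization that an idempotent Mal'cev condition failing in all such reducts implies omitting types \textbf{1} and \textbf{2} in locally finite varieties.

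The converse direction is worse off. Your only arity-changing tool is again the grid composition, which strictly \emph{increases} arity, so from the terms of arities $>N$ supplied by Theorem~\ref{th} you can never ``reach arities $3$ and $4$'': products of large arities are large. Identifying variables in a large-arity weak near--unanimity term does not repair this, since the weak near--unanimity identities control only evaluations with a single odd argument and are destroyed by identification. The subsequent sketch -- iterating the trace composition $\bar f\bullet\bar g$ until it ``stabilizes'' and arguing that the absence of type \textbf{1} and \textbf{2} neighbourhoods prevents incompatible traces -- is not an argument but a restatement of the difficulty; producing low-arity weak near--unanimity terms with a \emph{common} trace from high-arity ones is precisely the hard content of \cite{hhhm}, and nothing in your outline engages with how that is actually done. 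As it stands, neither implication is established.
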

\begin{proof}
The proof can be found in \cite{hhhm}.
\end{proof}

Therefore, omitting types 1 and 2 for a locally finite variety can be described by linear identities on 3--ary and a 4--ary term, both idempotent. In this paper we examine whether the same can be done by two at most 3--ary idempotent terms. It is sufficient to focus our attention to idempotent terms only, for the reasons that are explained in detail in \cite{hhm}. We shall use two particular algebras for our examination, both of them omitting types 1 and 2.

\section{ Examples of algebras generating varieties that omit unary and affine types } \label{section two}
\noindent $\mathbf{Example \  1}$\\

\noindent Let $\mathbf{B}=\langle \ \{\ \!0 \ ,\ 1 \} \ ,\ \land  \  \rangle$ \label{example 1} be the semilattice with two elements (i.e. $\land$ stands for a commutative, associative and idempotent binary operation). This algebra generates a variety that omits types 1 and 2, according to Theorem 2.7 from above (weak near--unanimity terms being $v(x,y,z)\approx x\land y \land z $ and $w(x,y,z,u)\approx x\land y\land z\land u$). \\

\noindent $\mathbf{Example \  2}$\\

\noindent Let $\mathbf{A}$ be a finite algebra with at least two elements and a single idempotent basic operation $ f(x_1,x_2,x_3)$, which is a ternary near--unanimity term (i.e. a majority term): \begin{equation*} f(x,x,y) \approx f(x,y,x) \approx f(y,x,x) \approx x \end{equation*} In case no arguments are equal, we can define $f$ like this:
\begin{equation*} f(a,b,c) = a,\  \text{for all}\  a,b,c \in \mathbf{A}\   \text{and}\  a\ne b,\  b \ne c,\  c\ne a
\end{equation*}
We shall prove now that algebra $\mathbf{A}$ generates a variety that omits types 1 and 2:

Let $g(x,y,w,z)$ be a $4$--ary term of this algebra defined by: $g(x,y,w,z) \approx f(x,y,f(x,w,z))$, $f$ being the basic operation. It is easy to check that $g$ is a weak near--unanimity term, and the identity $g(y,x,x,x) \approx  f(y,x,x)$ holds, so algebra $\mathbf{A}$ generates a variety that omits unary and affine types according to Theorem 2.7 from above.
\vspace{0.3 cm}

This algebra has some interesting properties:
\begin{enumerate}
\item every binary term--operation $t$ of $\mathbf{A}$ must satisfy one of these two identities:\label{binary term} \\$t(x,y)\approx x$ \\ $t(x,y)\approx y$;\\
\noindent In other words the only binary term--operations on $\mathbf{A}$  are projections $\pi_1 , \pi_2$;
\begin{proof}
We shall prove the statement by induction on the complexity of the term $t$:
\begin{itemize}
\item[-] if $t(x,y)$ is a projection the statement holds
\item[-] if $t(x,y)\approx f(t_1(x,y), t_2(x,y), t_3(x,y))$, where $t_1,t_2,t_3$ are less complex binary terms, then these three are projections by the induction hypothesis, therefore at least two of them are equal, so $t$ must be a projection too.
\end{itemize}
\end{proof}
\item every ternary term--operation $p$  of $\mathbf{A}$ satisfies exactly one of the following:\label{ternary term}\\$p(x,y,z)\approx x$\\ $p(x,y,z)\approx y$\\ $p(x,y,z)\approx z$\\ $p(x,x,y)\approx p(x,y,x)\approx p(y,x,x)\approx x$;\\

\vspace{0.1 cm}
\noindent This means $p$ is either one of the projections $\pi_1,\pi_2,\pi_3$  or a majority term--operation, that is there are no other ternary term--operations except for these four kinds.
\begin{proof}
We prove the second statement also by induction on the complexity of the term--function $p$:
\begin{itemize}
\item if $p(x,y,z)$ is a projection or the basic operation $f(x,y,z)$, the statement holds
\item if $p(x,y,z)\approx f(p_1(x,y,z), p_2(x,y,z), p_3(x,y,z))$,where $p_1,p_2,p_3$ are less complex ternary terms, then each of these three is either a projection or a  majority term by the induction hypothesis, so we have the following cases:
\begin{itemize}
\item[-] if at least two of  $p_1,p_2,p_3$ are majority terms, then $p$ is also a majority term;
\item[-] if exactly one of  $p_1,p_2,p_3$ is a majority term and remaining two are the same projection $\pi_j \ \text{for some}\  j \in \{1,2,3\}$ then $p$ is also a projection $\pi_j$;
\item[-] if exactly one of  $p_1,p_2,p_3$ is a majority term and remaining two are projections $\pi_i,\pi_j \ \text{for some}\ \\ i,j \in \{1,2,3,\}\ \  \text{and}\ \  i \ne j$\  then $p$ is a majority term;
\item[-] if the terms $p_1,p_2,p_3$ are projections $\pi_1,\pi_2,\pi_3$ (in whichever order) then $p(x,y,z)$ is $f(x,y,z)$ up to the permutation of variables, which is still a majority term;
\item[-] if the terms $p_1,p_2,p_3$ are projections $\pi_i,\pi_i,\pi_j$ (again in whichever order) $ \text{for some}\  i,j \in \{1,2,3\} \ \text{and}\  i \ne j$  then $p$ is a projection $\pi_i$;
\end{itemize}
\end{itemize}
\end{proof}
\end{enumerate}

\vspace{0.2 cm}
Now that we have listed the examples needed, let us notice that any system of (linear) identities possibly describing omitting types 1 and 2 (including any number of terms) must hold in algebras $\mathbf{B}$ and $\mathbf{A}$ from examples 1 and 2 respectively. We shall make use of this fact in the rest of the paper.

In section 3 we discuss systems of linear identities on a single binary term, two binary terms, a single ternary term and a binary and a ternary term. We prove that none of these systems describes omitting types 1 and 2 ( in fact none of them even implies omitting these two types).

In section 4 systems on two ternary terms are being discussed, and we prove that there are only three of them that could possibly describe omitting types 1 and 2 ( all three imply omitting these two types). As mentioned in the abstract, we do not resolve whether any of them actually describes this property.

\section{Systems of linear identities on a single binary term, two binary terms, a single ternary term and a binary and a ternary term}\label{section2}

As we have already mentioned, in this section we discuss systems of linear identities on a single binary term, two binary terms, a single ternary term and a binary and a ternary term, each of these cases being analyzed in a separate subsection. We shall prove here that none of these systems describes omitting types 1 and 2 (in fact none of them even implies omitting these two types).
\subsection {A single binary idempotent term} \label{singlebinary}
\vspace{0.1cm}

Let $t(x,y)$ be an idempotent binary term. If a system of linear identities on $t(x,y)$ (a single identity or more) describes omitting types 1 and 2, it must hold in algebra $\mathbf{A}$ from example 2, which means $t(x,y)$ has to be a projection map in $\mathbf{A}$. So, the system considered must allow $t(x,y)$ to be a projection map and it must not yield a trivial variety (algebra), but such a system holds in every algebra, therefore does not describe omitting types 1 and 2.

\vspace{0.1cm}

\noindent We can conclude that omitting types 1 and 2 cannot be described by a single binary idempotent term (using any number of identities).

\subsection{Two binary idempotent terms}\label{twobinary}

\vspace{0.1cm}

Let $t(x,y)$ and $s(x,y)$ be idempotent binary terms. If a system of identities on $t(x,y)$ and $s(x,y)$ describes omitting types 1 and 2, it must hold in algebra $\mathbf{A}$ from example 2, which means both $t$ and $s$ have to be projection maps. This means the identities of the system considered must allow both $t$ and $s$ to be projection maps, but these exist in every algebra, so the system cannot describe omitting types 1 and 2.

\vspace{0.3 cm}

\noindent From the previous we conclude that omitting types 1 and 2 cannot be described by two binary idempotent terms (using any number of identities).

\subsection{A single ternary idempotent term}\label{singleternary}

In this subsection we prove that omitting types 1 and 2 cannot be described by any number of linear identities on a single ternary idempotent term.
\noindent Previously, let us consider a specific reduct of a module that we use in the proof-- a full idempotent reduct of a module over $\mathbb{Z}_5$ (this is an algebraic structure obtained from a module over $\mathbb{Z}_5$ by taking into consideration only the idempotent term--operations of the module and all such term--operations):

In a full idempotent reduct of a module over $\mathbb{Z}_5$ a ternary term $p(x,y,z)$ must satisfy one of the following identities:
\begin{equation}
\begin{array}{lll}
p(x,y,z)\approx x & p(x,y,z)\approx y & p(x,y,z)\approx z\\
p(x,y,z)\approx 4x + 2y & p(x,y,z)\approx 4x + 2z & p(x,y,z)\approx 4y + 2z\\
p(x,y,z)\approx 2x + 4y & p(x,y,z)\approx 2x + 4z & p(x,y,z)\approx 2x + 4z\\
p(x,y,z)\approx 3x + 3z & p(x,y,z)\approx 3x + 3y & p(x,y,z)\approx 3y + 3z \\
p(x,y,z)\approx x + 2y + 3z & p(x,y,z)\approx x + 3y + 2z & p(x,y,z)\approx 2x + y + 3z\\
p(x,y,z)\approx 2x + 3y + z & p(x,y,z)\approx 3x + 2y + z & p(x,y,z)\approx 3x + y + 2z\\
p(x,y,z)\approx 4x + y + z& p(x,y,z)\approx x + y + 4z & p(x,y,z)\approx x + 4y + z\\
p(x,y,z)\approx 2x + 2y + 2z\\
\end{array}
\end{equation}There are no other ternary terms in this reduct.
\vspace{0.1cm}

\noindent Now we can discuss systems of linear identities on a single ternary idempotent term.

\vspace{0.1cm}

\begin{fakt} Suppose a system of identities on $p(x,y,z)$ describes omitting types 1 and 2, and let us denote it by $\sigma$. Then the system $\sigma$ has to hold in algebras $\mathbf{B}$ and $\mathbf{A}$ from examples 1 and 2 respectively, and it must not hold in any full idempotent reduct of a module over a finite ring (theorem 8 in \cite{nm}, or more detailed in \cite{nnm}).
\end{fakt}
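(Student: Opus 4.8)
The plan is to read the hypothesis ``$\sigma$ describes omitting types 1 and 2'' as the precise assertion that, for every locally finite variety $\mathcal{V}$, the variety $\mathcal{V}$ omits types 1 and 2 if and only if the ternary term operations of $\mathcal{V}$ can be used to interpret $p$ so that every identity of $\sigma$ becomes valid. Both halves of this equivalence are needed: the direction ``omits $\Rightarrow$ satisfies $\sigma$'' yields the statement about $\mathbf{B}$ and $\mathbf{A}$, while the direction ``satisfies $\sigma \Rightarrow$ omits'', applied contrapositively, yields the statement about the module reducts.

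For the first assertion, I would recall from Section~\ref{section two} that both $\mathbf{B}$ and $\mathbf{A}$ generate varieties omitting types 1 and 2; this was verified through Theorem~\ref{th} by exhibiting the relevant weak near--unanimity terms. Applying the direction ``omits $\Rightarrow$ satisfies $\sigma$'' to the variety generated by $\mathbf{B}$ and to the variety generated by $\mathbf{A}$, each of these varieties must satisfy $\sigma$. Since identities valid throughout a variety are in particular valid in its generating algebra, $\sigma$ holds in $\mathbf{B}$ and in $\mathbf{A}$, as claimed. Here ``holds'' is to be understood as the existence of an interpretation of $p$ by a ternary term operation of the algebra under which all identities of $\sigma$ are true, which is legitimate because all terms under consideration are idempotent.

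For the second assertion, I would argue by contradiction. Let $\mathbf{M}$ be a full idempotent reduct of a nontrivial module over a finite ring, and suppose $\sigma$ were to hold in $\mathbf{M}$. By the cited result (theorem 8 in \cite{nm}, in fuller detail \cite{nnm}), every such reduct carries affine behaviour, so type 2 appears in the typeset of $\mathbf{M}$, and hence the locally finite variety generated by $\mathbf{M}$ does not omit types 1 and 2. On the other hand, the direction ``satisfies $\sigma \Rightarrow$ omits'' of the defining equivalence, applied to that variety, would conclude from its satisfaction of $\sigma$ that it does omit types 1 and 2 --- a contradiction. Therefore $\sigma$ can hold in no such reduct.

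The only genuinely external ingredient, and thus the main obstacle to a fully self--contained argument, is the claim that a full idempotent reduct of a nontrivial module over a finite ring fails to omit type 2; this is exactly what the cited theorem supplies, by identifying the relevant minimal sets of such a reduct as affine. Everything else reduces to a direct application of the defining equivalence for $\sigma$, once the phrase ``describes omitting types 1 and 2'' has been unpacked into the two--directional Mal'cev characterization above.
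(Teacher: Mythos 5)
Your proposal is correct and takes essentially the same route as the paper: the paper states this Fact without an explicit proof, treating it as immediate from unpacking ``describes'' into the two-directional Mal'cev equivalence, from the verification in Section 3 that $\mathbf{B}$ and $\mathbf{A}$ generate (locally finite) varieties omitting types 1 and 2, and from the cited theorem that a nontrivial full idempotent reduct of a module over a finite ring exhibits type 2 and hence cannot omit it. Your write-up simply makes that implicit argument explicit (including the needed ``nontrivial'' qualifier on the module), so there is nothing to object to.
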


\noindent Based on this fact we can state the following:
\begin{itemize}
\item if identities of the system $\sigma$ allow $p$ to be defined as a projection map in algebra $\mathbf{A}$ (any projection map) then $p$ can be defined as a projection map in any algebra, so the system does not describe omitting types 1 and 2. Therefore the identities of the system must have forms that allow $p$ to be a majority term and only a majority term in $\mathbf{A}$.
\item if there is an identity of the form $p(x,y,z)\approx p(u,v,w)$ in the system $\sigma$, then $\{x,y,z\} = \{u,v,w\}$, i.e. $(u,v,w)$ is a permutation of $(x,y,z)$, for otherwise $p$ could not be a majority term in $\mathbf{A}$.
\item considering identities with less than three variables on either side: if the left hand side of an identity is some of $p(x,x,y), p(x,y,x), p(y,x,x)$, then on the right there has to be either $x$ alone, or one of the terms $p(x,x,y), p(x,y,x), p(y,x,x),p(x,x,z),p(z,x,x),p(x,z,x)$(again for the same reason, $p$ being necessarily a majority term in $\mathbf{A}$).
\item if the system $\sigma$ contains only identities having variables $x,y,z$ on both sides and/or identities with $x,x,y$ on both sides (that is $x$ occurs twice, $y$ once on both sides) then the system holds in a full idempotent reduct of a module over $\mathbb{Z}_5$ (and therefore does not describe omitting types 1 and 2), for we can define $p$ to be $2x + 2y + 2z$ in this reduct. This means $\sigma$ has to include an identity with $x,x,y$ on the left and $x,x,z$ on the right (up to a permutation of these variables, of course), or $x,x,y$ on the left and $x$ alone on the right.
\item since the system $\sigma$ has to hold in algebra $\mathbf{B}$ (example 1), from the previous item we can conclude that $p$ has to be a binary term in $\mathbf{B}$ (it cannot be a projection map for the system does not allow that). Now, if the system allows $p(x,y,z)$ to be defined as a binary term in $\mathbf{B}$, i.e. one of the terms $x \land y$, $y \land z$, $x \land z$, then it also allows $p$ to be one of the terms $3x+3y$, $3y+3z$, $3x+3z$ in a full idempotent reduct of a module over $\mathbb{Z}_5$. This means $\sigma$ does not describe omitting types 1 and 2.
\end{itemize}

\noindent This proves that no system on $p(x, y, z)$ satisfies the necessary conditions for describing omitting types 1 and 2 given above ( i.e. to hold in algebras $\mathbf{B}$ and $\mathbf{A}$ and not to hold in any full idempotent reduct of a module over a finite ring).

\subsection{A binary idempotent term and a ternary idempotent term}\label{bin_ter}

\vspace{0.1cm}

\noindent In this subsection we shall discuss whether a system of any number of linear identities on $t(x,y)$ and $p(x,y,z)$ can describe omitting types 1 and 2 ($t$ and $p$ both being idempotent terms).

\vspace{0.1 cm}
\noindent Suppose we have a system on $t$ and $p$ describing omitting types 1 and 2, and let us denote it by $\tau$. It has to hold in $\mathbf{A}$, so $t$ has to be a projection map, and $p$ either a projection map or a majority term in this algebra (this is proved in section \ref{section two}). If the system $\tau$ allows both $t$ and $p$ to be projections in $\mathbf{A}$, it holds in any algebra ($t$ and $p$ being the same projection maps as in $\mathbf{A}$), so it cannot describe omitting types 1 and 2. Therefore $\tau$ must hold in $\mathbf{A}$ only for $p$ being a majority term (and $t$ a projection map, of course). Further more, algebra $\mathbf{B}$ has to satisfy the system also, so let us analyze possible cases:
\begin{itemize}
\item both $t$ and $p$ can be defined as projection maps in $\mathbf{B}$ -- this is impossible because if this were the case then both terms could be defined as projection maps in $\mathbf{A}$, and we have already excluded that.
\item $t$ can be defined as a projection map and $p$ as a binary term (i.e. one of the $x \land y$, $y \land z$, $x \land z$) in $\mathbf{B}$ -- then we can define $t$ to be the same projection map, and $p$ to be one of the terms $3x+3y$, $3y+3z$, $3x+3z$ in a full idempotent reduct of a module over $\mathbb{Z}_5$, i.e. $t$ and $p$ exist in this reduct, therefore the system $\tau$ does not describe omitting types 1 and 2.
\item $t$ can be defined as a projection map and $p$ as a ternary term in $\mathbf{B}$ (meaning, of course, that we cannot define $p$ as either a projection map or a binary term in $\mathbf{B}$ )-- in this case the system $\tau$ cannot contain an identity on $t$ and $p$, i.e. all the identities are either only on $t$ or only on $p$. Since the identities on $t$ allow $t$ to be a projection map they can be ignored, so $\tau$ describes omitting types 1 and 2 if and only if remaining identities only on $p$ do the same. This is already proved to be impossible (subsection \ref{singleternary}).
\item $t$ can be defined only as a binary term and $p$ as a projection map in $\mathbf{B}$ -- once again $\tau$ cannot include an identity on $t$ and $p$, i.e. all the identities are either only on $t$ or only on $p$. Since $t$ has to be a projection map in $\mathbf{A}$, the identities on $t$ must allow that, which means $t$ can be defined as the same projection map in $\mathbf{B}$. Therefore this case is impossible.
\item both $t$ and $p$ can be defined as  binary terms in $\mathbf{B}$ (and of course, none of them as a projection map) -- if this is the case it is easily seen that both can be defined as binary terms in a full idempotent reduct of a module over $\mathbb{Z}_5$ (some of the terms $3x+3y$, $3y+3z$, $3x+3z$), so the system $\tau$ does not describe omitting types 1 and 2.
\item $t$ can be defined as a binary term and $p$ as a ternary term in $\mathbf{B}$ (and no other possibilities, as before) -- then there is no identity only on $t$ in $\tau$, since it would have to be this one $t(x,y) \approx t(y,x)$, and it cannot hold in $\mathbf{A}$. Further more, if $t(x,y)$ is on the left, then on the right we have term $p$ with variables $x$ and $y$ only. This allows us to eliminate term $t$ from all the identities except for one, obtaining an equivalent system. Now we can ignore the identity with $t$ (the only one of the form $t(x,y) \approx p(u,v,w)$, where $\{u,v,w\}=\{x,y\}$) and state that the system $\tau$ describes omitting types 1 and 2 if and only if the remaining identities only on $p$ do the same, which is impossible (subsection \ref{singleternary}).
\end{itemize}

\vspace{0.1cm}
\noindent By this we have proved that omitting types 1 and 2 cannot be described by a binary and a ternary term, both idempotent (using any number of linear identities).
\section{Two ternary idempotent terms}\label{stt}
\vspace{0.1 cm}
In the following section we shall discuss systems of linear identities on two ternary terms, both idempotent, and we shall prove that only three of these systems could possibly describe omitting types 1 and 2 (all three imply omitting these two types). However, we do not resolve whether any of them actually describes this property.

\vspace{0.1 cm}

\noindent Let $p(x,y,z)$ and $q(x,y,z)$ be ternary idempotent terms; as before we shall suppose there is a system of linear identities on $p$ and $q$ describing omitting types 1 and 2, and we shall denote it by $\phi$.

\noindent Let us notice the important fact: if the system $\phi$ has no identity on $p$ and $q$, i.e. all its identities are either only on $p$ or only on $q$, then we can apply the conclusion that we came to in subsection \ref{singleternary}: if $p$ exists in algebras $\mathbf{B}$ and $\mathbf{A}$ from examples 1 and 2, then $p$ also exists in a full idempotent reduct of a module over $\mathbb{Z}_5$, and the same holds for $q$. Therefore the system $\phi$ needs to have at least one identity on $p$ and $q$.

\noindent Regarding the fact that $\phi$ has to hold in algebra $\mathbf{A}$ (i.e. terms $p$ and $q$ have to exist in this algebra) there are three possible cases:
\begin{enumerate}
\item $p$ and $q$ can both be projection maps in algebra $\mathbf{A}$ -- then $\phi$ holds in any algebra ($p$ and $q$ being the same projection maps as in $\mathbf{A}$), so there is no need to analyze this case any further.
\item the system $\phi$ allows only one of $p$, $q$ to be a projection map in $\mathbf{A}$, and the other term has to be a majority term in this algebra.
\item the system $\phi$ does not allow either of $p$ and $q$ to be a projection map in $\mathbf{A}$, i.e. both are majority terms in this algebra.
\end{enumerate}
We shall analyze cases 2 and 3 in the following two subsections -- in the subsection \ref{cases12} we deal with case 2, and in subsection \ref{case3} with case 3.
\subsection{ $\mathbf{p}$ and $\mathbf{q}$ are a projection and a majority term in $\mathbf{A}$ } \label{cases12}

\vspace{0.3 cm}

\noindent Suppose the system $\phi$ holds in $\mathbf{A}$ for a projection map and a majority term (case 2 from above). We can assume with no loss of generality that $p$ is $\pi_1$ and $q$ a majority term, therefore these terms satisfy the following in $\mathbf{A}$:
\begin{equation}
\begin{array}{r}
x\approx p(x,x,y)\approx p(x,y,y)\approx p(x,y,x)\approx q(x,x,y)\approx q(x,y,x)\approx q(y,x,x)
\label{sys}
\end{array}
\end{equation}
\noindent It is easily seen that $p$ and $q$ satisfying the system do not exist in $\mathbf{B}$ (example 1), so \eqref{sys} cannot be the system describing omitting types 1 and 2. We shall try to obtain the system mentioned by eliminating some identities from \eqref{sys}, so let us eliminate the first one:
\begin{equation}
\begin{array}{r}
p(x,x,y)\approx p(x,y,y)\approx p(x,y,x) \approx q(x,x,y)\approx q(x,y,x)\approx q(y,x,x)
\end{array}\label{system1}
\end{equation}
\noindent We shall prove here that system \eqref{system1} implies omitting types 1 and 2, because terms $p$, $q$ do not exist in any full idempotent reduct of a module over a finite ring:
\begin{proof}
If term $p$ is projection map $\pi_1$ in a reduct, then $q$ would have to be at most a binary term, which is impossible. Term $p$ cannot be a binary term in any reduct either (we can see this directly from the identities), so it has to be a ternary term, i.e. $\alpha x + \beta y + \gamma z$ for $\alpha + \beta + \gamma =1$ and none of $\alpha, \beta, \gamma$ is zero. Now, from the first identity we obtain $\alpha x + \beta x + \gamma y = \alpha x + \beta y + \gamma y$, which yields $\alpha  + \beta = \alpha$, i.e. $\beta = 0$. Therefore $p$ does not exist in any reduct over a finite ring, and this completes the proof.
\end{proof}
\vspace{0.1cm}
\noindent So, we have obtained the system that implies omitting types 1 and 2, but it is not minimal -- namely, the system given below, obtained from the previous by eliminating identity $p(x,y,y)\approx p(x,y,x)$, also implies omitting types 1 and 2 (this is proven the same way like the above case).
\begin{equation}
\left\{
\begin{array}{r}
p(x,x,y)\approx p(x,y,y)\\
p(x,y,x)\approx q(x,x,y) \approx q(x,y,x) \approx q(y,x,x)
\end{array}\right. \label{system2}
\end{equation}
\noindent Moreover, it can be proved that \eqref{system2} is a minimal system with this property:
\begin{proof}
It'll be sufficient to prove that any set of identities which is a proper subset of \eqref{system2} allows us to define $p$ and $q$ in some reduct of a module over a finite ring.

\vspace{0.4 cm}

\noindent If we eliminate the first identity, we obtain the system:
\begin{equation*}
\begin{array}{r}
p(x,y,x)\approx q(x,x,y) \approx q(x,y,x)\approx q(y,x,x)
\end{array}
\end{equation*}
Now both terms can be defined as $2x + 2y + 2z$ in a reduct over $\mathbb{Z}_5$. So the first identity ($p(x,x,y)\approx p(x,y,y)$) must stay.

\vspace{0.2 cm}

\noindent If the term $p(x,y,x)$ is completely omitted, we have an identity only on $p$ (the first one), and the rest of them are only on $q$, but in this case $p$ can be defined as $\pi_1$ and $q$ as $2x + 2y + 2z$ in a reduct over $\mathbb{Z}_5$. This means we have to keep one of the following identities (i.e. at least one) $p(x,y,x) \approx q(x,x,y), p(x,y,x) \approx q(x,y,x), p(x,y,x) \approx q(y,x,x)$, and we shall consider it the second. Let us go through the cases now:
\begin{itemize}
\item if we have $p(x,y,x) \approx q(x,x,y)$ as the second identity (the first identity being $p(x,x,y)\approx p(x,y,y)$, as explained), obviously we need to add one more identity from the system \eqref{system2} to these two, for if we do not, $p$ and $q$ can both be projection maps (in fact a single identity is all we can add here -- adding any two identities from  \eqref{system2} gives us the whole system \eqref{system2}). We have three options for the third identity:
\begin{equation*}
\left\{
\begin{array}{r}
p(x,x,y)\approx p(x,y,y)\\
p(x,y,x) \approx q(x,x,y)\\
q(x,y,x)\approx q(y,x,x)
\end{array}\right.
\end{equation*}
this system allows $p$ to be $\pi_1$ and $q$ to be $3x + 3y$ in a reduct over $\mathbb{Z}_5$;
\begin{equation*}
\left\{
\begin{array}{r}
p(x,x,y)\approx p(x,y,y)\\
p(x,y,x) \approx q(x,x,y)\approx q(y,x,x)
\end{array}\right.
\end{equation*}
this system allows $p$ to be $\pi_1$ and $q$ to be $\pi_2$ in a reduct over $\mathbb{Z}_5$;
\begin{equation*}
\left\{
\begin{array}{r}
p(x,x,y)\approx p(x,y,y)\\
p(x,y,x) \approx q(x,x,y)\approx q(x,y,x)
\end{array}\right.
\end{equation*}
this system allows both $p$ and $q$ to be $\pi_1$ in a reduct over $\mathbb{Z}_5$;

So, nothing new can be obtained with the second identity being $p(x,y,x) \approx q(x,x,y)$.

\item if we have $p(x,y,x) \approx q(x,y,x)$ as the second identity, adding a third one from \eqref{system2} may give us the following (again, adding any two identities from  \eqref{system2} gives us the whole system \eqref{system2}):
\begin{equation*}
\left\{
\begin{array}{r}
p(x,x,y)\approx p(x,y,y)\\
p(x,y,x) \approx q(x,y,x)\\
q(x,x,y)\approx q(y,x,x)
\end{array}\right.
\end{equation*}
this system allows $p$ to be $\pi_1$ and $q$ to be $3x + 3z$ in a reduct over $\mathbb{Z}_5$;
\begin{equation*}
\left\{
\begin{array}{r}
p(x,x,y)\approx p(x,y,y)\\
p(x,y,x) \approx q(x,y,x)\approx q(x,x,y)
\end{array}\right.
\end{equation*}
this system allows both $p$ and $q$ to be $\pi_1$ in a reduct over $\mathbb{Z}_5$;
\begin{equation*}
\left\{
\begin{array}{r}
p(x,x,y)\approx p(x,y,y)\\
p(x,y,x) \approx q(x,y,x)\approx q(y,x,x)
\end{array}\right.
\end{equation*}
this system allows $p$ to be $\pi_1$ and $q$ to be $\pi_3$ in a reduct over $\mathbb{Z}_5$;

There are no more cases with $p(x,y,x) \approx q(x,y,x)$ as the second identity, except for the whole system \eqref{system2}.

\item if we have $p(x,y,x) \approx q(y,x,x)$ as the second identity, adding a third one from \eqref{system2} may give us the following (adding two identities gives the whole system, as before):
\begin{equation*}
\left\{
\begin{array}{r}
p(x,x,y)\approx p(x,y,y)\\
p(x,y,x) \approx q(y,x,x)\approx q(x,y,x)
\end{array}\right.
\end{equation*}
this system allows $p$ to be $\pi_1$ and $q$ to be $\pi_3$ in a reduct over $\mathbb{Z}_5$;
\begin{equation*}
\left\{
\begin{array}{r}
p(x,x,y)\approx p(x,y,y)\\
p(x,y,x) \approx q(y,x,x)\approx q(x,x,y)
\end{array}\right.
\end{equation*}
this system allows $p$ to be $\pi_1$ and $q$ to be $\pi_2$ in a reduct over $\mathbb{Z}_5$;
\begin{equation*}
\left\{
\begin{array}{r}
p(x,x,y)\approx p(x,y,y)\\
p(x,y,x) \approx q(y,x,x)\\
q(x,x,y)\approx q(x,y,x)
\end{array}\right.
\end{equation*}
this system allows $p$ to be $\pi_1$ and $q$ to be $3y + 3z$ in a reduct over $\mathbb{Z}_5$;

Once again we have obtained nothing new with $p(x,y,x) \approx q(y,x,x)$ as the second identity.
\end{itemize}
By this we have proved minimality of the system \eqref{system2}, with respect to implying omitting types 1 and 2.
\end{proof}

\noindent Further more, the following can be proved (simply by analyzing all possible cases): elimination of identities from the system \eqref{system1} gives us either a system equivalent to \eqref{system2}, up to the permutation of variables, or a system that holds in a reduct of a module over some finite ring, which means \eqref{system2} is the only system implying omitting types 1 and 2 obtainable from \eqref{system1} (and a proper subset of \eqref{system1}). We shall not provide the whole proof here, for it is too long, but only analyze two proper subsets of the system \eqref{system1} (the whole proof, however, can be found in \cite{nnmm}):
\begin{itemize}
\item[subset 1]
\begin{equation*}
\left\{
\begin{array}{r}
p(x,x,y)\approx p(x,y,y)\\
p(x,y,x)\approx q(y,x,x)\approx q(x,y,x) \approx q(x,x,y)
\end{array}\right.
\end{equation*}
This system implies nonexistence of $p$ (and $q$) in any reduct of a module over a finite ring, so it implies omitting types 1 and 2, but it is equivalent to system \eqref{system2}, and is obtainable from it by a permutation of variables of the term $q$ (we substitute $q(z,x,y)$ for $q(x,y,z)$).
\item[subset 2]
\begin{equation*}
\left\{
\begin{array}{r}
p(x,x,y)\approx p(x,y,x)\\
p(x,y,y)\approx q(y,x,x) \approx q(x,y,x)\approx q(x,x,y)
\end{array}\right.
\end{equation*}
This system allows $p$ and $q$ to be respectively $4x + y + z$, $2x + 2y + 2z$ in a reduct over $\mathbb{Z}_5$, so it does not describe omitting types 1 and 2.
\end{itemize}

 \vspace{0.3cm}

\noindent Up to this point we have started from \eqref{sys}, eliminated the first identity obtaining \eqref{system1}, and then eliminated yet another identity from \eqref{system1} obtaining \eqref{system2}, which is proven to imply omitting types 1 and 2, and to be the only (minimal) system with this property obtainable from \eqref{system1}, up to the permutation of variables. So, the next step would be analyzing what happens if we eliminate an identity other than the first one from \eqref{sys}.

\vspace{0.2 cm}

\noindent First we shall suppose the system obtained includes the identity $p(x,x,y)\approx x$ . It also has to include an identity on  both $p$ and $q$, as explained at the beginning of the current section. So, up to this point we have a subset of the system \eqref{sys} with the first identity being $p(x,x,y)\approx x$, and the second is an identity from \eqref{sys} on $p$ and $q$. There may be more identities from the system \eqref{sys} in this subset, which (the subset) we shall denote by  $\sigma$. We keep in mind $\sigma$ needs to hold in algebra $\mathbf{B}$ from example 1, so let us discuss on possible cases:
\begin{itemize}
\item in the identity on both $p$ and $q$ (the second identity in $\sigma$), left hand side must not be $p(x,x,y)$, for if this is the case, $q$ has to be at most a binary term in algebra $\mathbf{B}$, as well as $p$. This would allow us to define both terms in a reduct over $\mathbb{Z}_{5}$, as projections and/or binary terms (some of the terms $3x + 3y$, $3x + 3z$, $3y + 3z$). So, in the second identity of the system $\sigma$ left hand side is either $p(x,y,y)$ or $p(x,y,x)$.
\item if the terms $x$ and $p(x,x,y)$ do not occur at all in the rest of the system $\sigma$, then all the identities except for the first one include only some of the terms $p(x,y,x)$, $p(x,y,y)$, $q(x,x,y)$, $q(x,y,x)$, $q(y,x,x)$. Therefore we could define $p$ and $q$ to be $4x+2y$ and $2x + 2y + 2z$ respectively in a reduct over $\mathbb{Z}_5$. So, $p(x,x,y)$ or $x$ alone  must occur somewhere in the rest of the system $\sigma$.
\item if the term $p(x,x,y)$ (or $x$ alone) occurs in an identity on $p$ in the rest of the system, we shall obtain either $p(x,x,y)\approx x \approx p(x,y,y)$, or $p(x,x,y)\approx x \approx p(x,y,x)$. In both cases  $p$ has to be a projection map in $\mathbf{B}$, and because of the identity on both $p$ and $q$ (i.e. the second identity of the system $\sigma$), $q$ is at most a binary term in this algebra, but this allows $p$ and $q$ to be defined in a reduct over $\mathbb{Z}_{5}$, as explained earlier.
\item if the term $p(x,x,y)$ (or $x$ alone) occurs in an identity on $p$ and $q$ (i.e. on $x$ and $q$) we shall obtain one of the following identities: $p(x,x,y)\approx q(x,x,y) \approx x $, $p(x,x,y)\approx q(y,x,x) \approx x $, $p(x,x,y)\approx q(x,y,x) \approx x $. Again each of them   means $q$ is at most a binary term in $\mathbf{B}$, as well as $p$, so both are definable in a reduct over $\mathbb{Z}_{5}$.
\end{itemize}
By this it is proved that the system $\sigma$ can not describe omitting types 1 and 2.

\vspace{0.1 cm}

\noindent By elimination of identities from the system \eqref{sys}, we can also obtain a system that includes the identity $p(x,y,x) \approx x$ or $p(x,y,y) \approx x$ (of course, in each case it has to include an identity on both $p$ and $q$). Let us provide a brief overview on these systems:
\begin{itemize}
\item[case 1] Let $\sigma_1$ be a subset of the system \eqref{sys} with the first identity being $p(x,y,x) \approx x$ (the second identity in $\sigma_1$ is an identity on $p$ and $q$ from \eqref{sys}).  We can obtain an equivalent system  by a permutation of variables of the term $p$ (we substitute $p(x,y,z)$ for $p(x,z,y)$ in $\sigma_1$ ) but this is the system of the form $\sigma$, with the first identity being $p(x,x,y)\approx x$,  which we have already proved not to describe omitting types 1 and 2.
\item [case 2] Let $\sigma_2$ be a subset of the system \eqref{sys} with the first identity being $p(x,y,y) \approx x$ (the second identity in $\sigma_2$ is an identity on $p$ and $q$ from \eqref{sys}). Since the system $\sigma_2$ needs to hold in $\mathbf{B}$, $p$ is to be $\pi_1$ and $q$ at most a binary term in this algebra. This allows both of them to be defined in a reduct over $\mathbb{Z}_{5}$. Therefore $\sigma_2$ cannot describe omitting types 1 and 2.
\end{itemize}

\noindent We can conclude now that none of the systems obtained from \eqref{sys} including either of the identities $x \approx p(x,x,y)$, $x \approx p(x,y,x)$, $x \approx p(x,y,y)$ can describe omitting types 1 and 2.

\vspace{0.3 cm}

\noindent Let us now discuss the subsets of the system \eqref{sys} in which the first identity is one of the following three: $x \approx q(x,x,y)$, $x \approx q(x,y,x)$, $x \approx q(y,x,x)$. By examination of these systems (which is not presented here for it is too long and done in the same manner as already seen, but can be found in \cite{nnmm}), we come to the single system, up to the permutation of variables, that implies (and possibly describes) omitting types 1 and 2:
\begin{equation}
\left\{
\begin{array}{r}
x\approx q(x,y,x)\\
p(x,y,y)\approx p(x,y,x)\\
p(x,x,y)\approx q(x,x,y) \approx q(y,x,x)\label{newone}
\end{array}\right.
\end{equation}

\vspace{0.3 cm}

\subsection{both majority terms} \label{case3}
If both $p$ and $q$ have to be majority terms in $\mathbf{A}$ they satisfy the following in this algebra:
\begin{equation}
\begin{array}{r}
x\approx p(x,x,y)\approx p(x,y,x)\approx p(y,x,x)\approx q(y,x,x)\approx q(x,y,x)\approx q(x,x,y)
\label{system4}
\end{array}
\end{equation}
\noindent It is easily seen that $p$ and $q$ satisfying the system cannot exist in $\mathbf{B}$ (example 1), so some identities have to be eliminated. Notice that we have to keep at least one  identity having $x$ on the left, for if we do not, the remaining system holds in a reduct of a module over $\mathbb{Z}_5$ (we can define both $p$, $q$ to be $2x + 2y + 2z$).

Let us try eliminating the identity $p(x,x,y)\approx p(x,y,x)$. Now we have the following system that holds in $\mathbf{B}$:
\begin{equation}
\left\{
\begin{array}{r}
x\approx p(x,x,y)\\
p(x,y,x)\approx p(y,x,x)\approx q(y,x,x)\approx q(x,y,x)\approx q(x,x,y)
\label{system3}
\end{array}\right.
\end{equation}
\noindent We shall prove now that the system \eqref{system3} implies omitting types 1 and 2.
\begin{proof}
To prove that the system implies omitting types 1 and 2 it is sufficient to show that $p$, $q$ cannot exist in any reduct of a module over a finite ring. From the identities of the system it is easily seen that $q$ can only be a ternary term in any reduct (if it exists at all) i.e. $\alpha x + \beta y + \gamma z$ for $\alpha + \beta + \gamma =1$ and none of $\alpha, \beta, \gamma$ is zero. From the forth identity we obtain the following: $\alpha y + (\beta + \gamma)x = (\alpha + \gamma )x + \beta y$, and this implies $\alpha = \beta$. Then from the last identity we have: $(\alpha + \gamma)x + \alpha y = 2\alpha x + \gamma y$ , which gives $\alpha = \gamma$ also. Therefore, if $q$ exists in any reduct of a module over a finite ring, it has the form $\alpha x + \alpha y + \alpha z$, for $3\alpha = 1$. Then, from the third identity we have $p(x,y,z)\approx \alpha x + 2\alpha y$ (since $p$ can only be a binary term in any reduct), and therefore the second identity gives $\alpha = 2\alpha$ (this should hold in a finite ring mentioned), which implies $\alpha = 0$, and this is a contradiction. By this we have proved that $q$ (and consequently $p$) cannot exist in any reduct of a module over any finite ring, which means the system  \eqref{system3} implies omitting types 1 and 2.
\end{proof}

\vspace{0.3cm}

\noindent The system \eqref{system3} is also a minimal system implying omitting types 1 and 2, for any proper subset of \eqref{system3} holds in some reduct of a module over a finite ring (for some terms $p$ and $q$). This is proven the same way as minimality of the system \eqref{system2}, i.e. by analyzing all possible cases, and the proof is provided in \cite{nnmm}.

\vspace{0.3 cm}

\noindent If we return to the system \eqref{system4} and analyze other ways to eliminate identities in order to obtain a system that describes omitting types 1 and 2, we come to the following conclusion: if a system obtained (by elimination of identities from \eqref{system4}) holds in algebra $\mathbf{B}$, it is either equivalent to \eqref{system3} up to the permutation of variables, or it holds in a reduct of a module over some finite ring (once again, the examination of all the subsets of the system \eqref{system4} is provided in \cite{nnmm}).

\vspace{0.1cm}

\noindent We can conclude the following: from the system \eqref{system4} we can obtain a single system, up to the permutation of variables, which possibly describes omitting types 1 and 2 and that is the system \eqref{system3}.

\vspace{0.3 cm}

\noindent By this we have examined all possible forms of a system on $p$ and $q$ regarding the existence of terms in $\mathbf{A}$, and obtained three systems (that is \eqref{system2}, \eqref{newone}, \eqref{system3}), on variables $x$ and $y$, that imply and possibly describe omitting types 1 and 2. Before the conclusion there is yet another question left to examine -- can we obtain anything new from systems of identities on $p$ and $q$ with more than two variables?

\vspace{0.3 cm}
\subsection{systems of identities on $\mathbf{p}$ and $\mathbf{q}$ including more than two variables}
\noindent In this subsection we shall prove that nothing new can be obtained from systems of identities on $p$ and $q$ including more than two variables.

\vspace{0.1 cm}

\noindent Suppose we have a system on two ternary idempotent terms $p$ and $q$ that describes omitting types 1 and 2, and suppose there is an identity (or more of them) including more than two variables in this system. We shall denote this system by $\tau$ and discuss possible cases according to the number of variables:
\begin{itemize}
\item if there is an identity including six variables in $\tau$ , it can only be one of these two identities: $p(x,y,z) \approx p(u,v,w)$, $p(x,y,z) \approx q(u,v,w)$. In both cases we obtain the identity  $x \approx w$, which only holds in a trivial algebra (i.e. a trivial variety), so this case is impossible (assuming that $\tau$ describes omitting types 1 and 2).
\item if there is an identity including five variables in the system $\tau$, there are two possibilities:
\begin{itemize}
\item the identity mentioned can yield a trivial variety -- this happens there are $x,y,z$ on the left and $u,v$ on the right,  e.g. $p(x,y,z) \approx p(u,u,v)$, $p(x,y,z) \approx q(v,u,v)$, but also in these two cases: $p(x,y,z) \approx p(u,x,v)$, $p(x,y,z) \approx p(u,v,x)$ (if we substitute $x$ for $u$ and $v$ in the latter two identities, we shall obtain $p(x,y,z) \approx x$, which means $p$ has to be the first projection map, but that would give us  $x \approx u$, which yields a trivial variety).
\item the identity considered can imply that one of the terms $p$ and $q$, or both of them, must be a projection map (maps) -- this happens if the identity is one of the following: $p(x,y,z) \approx p(x,u,v)$, $p(x,y,z) \approx q(x,u,v)$, $p(x,y,z) \approx q(u,x,v)$, $p(x,y,z) \approx q(u,v,x)$. The identity $p(x,y,z) \approx p(x,u,v)$ implies that $p$ has to be a projection map, which exists in any algebra, so in this case we can substitute $x$ (or $y$ or $z$) for $p(x,y,z)$ in the system $\tau$, obtaining a system only on $q$, which cannot describe omitting types 1 and 2 (this has been proven in the subsection \ref{singleternary}). If both terms have to be projection maps then the system $\tau$ obviously can not describe omitting types 1 and 2.
 \end{itemize}
\item if there is an identity including four variables in the system $\tau$, there are three possibilities:
\begin{itemize}
\item the identity mentioned can yield a trivial variety, like in these cases: $p(x,y,z) \approx w$, $p(x,y,z) \approx p(w,w,x)$, $p(x,y,z) \approx p(w,x,w)$, $p(x,y,z) \approx p(w,x,x)$, $p(x,y,y) \approx p(w,w,z)$... Obviously none of these identities cannot occur in the system $\tau$.
\item the identity including four variables can imply that one of the terms has to be a projection map in any algebra, i.e.: $p(x,y,z) \approx p(x,w,w)$, $p(x,y,z) \approx q(w,w,x)$, $p(x,y,z) \approx p(x,w,x)$, $p(x,y,z) \approx q(w,x,x)$, $p(x,y,x) \approx p(z,w,x)$, $p(x,y,x) \approx q(z,w,x)$ ... If this were the case, we could substitute a single variable for one of the terms in $\tau$ (for example, if $p$ has to be the first projection map then we can substitute $x$ for $p(x,y,z)$ in the whole system), obtaining a system on a single ternary term which does not describe omitting types 1 and 2 (proved in the subsection \ref{singleternary}). Therefore none of the identities from above may occur in the system $\tau$.
\item the identity including four variables can imply that one of the terms has to be at most a binary term in any algebra, i.e.: $p(x,y,z) \approx p(w,x,y)$, $p(x,y,z) \approx p(x,w,y)$, $p(x,y,z) \approx q(y,x,w)$...(from the first identity we obtain $p(x,y,z) \approx p(x,x,y)\approx t(x,y)$, for some new binary term $t$). This means we can substitute a binary term $t(x,y)$ for $p(x,y,z)$ in the whole system $\tau$, obtaining a system on a binary and a ternary term ($t$ and $q$ respectively), and this is proven not to describe omitting types 1 and 2 (subsection \ref{bin_ter}). Therefore none of the identities from above may occur in the system $\tau$.
\end{itemize}
\item if there is an identity including three variables in the system $\tau$, these are the possible cases:
\begin{itemize}
\item there are three variables on one side of the identity , and only two of them on the other:$p(x,y,z) \approx p(x,x,y)$, $p(x,y,z) \approx q(x,y,y)$... In these cases we can substitute a new binary term $t(x,y)$ for the term $p$ in the whole system $\tau$, obtaining a system on a binary and a ternary term which cannot describe omitting types 1 and 2 (subsection \ref{bin_ter}). Therefore none of these identities may occur in the system $\tau$.
\item there are three variables on both sides of the identity: $p(x,y,z) \approx p(x,z,y)$, $p(x,y,z) \approx p(y,x,z)$, $p(x,y,z) \approx p(y,z,x)$... Each of these identities may occur in $\tau$. On the other side, $\tau$ can not include any of the identities $p(x,y,z) \approx q(x,z,y)$, $p(x,y,z) \approx q(y,z,x)$, $p(x,y,z) \approx q(y,x,z)$, etc, for if this were the case we could simply substitute $p$ for $q$ in the whole system obtaining a system only on $p$ which cannot describe omitting types 1 and 2 (subsection \ref{singleternary}). So, $\tau$ can include only some of the identities on $p$ with three variables on both sides.
\item there are two variables on each side of the identity: $p(x,y,y) \approx p(x,z,x)$, $p(x,y,x) \approx p(z,z,x)$... Each of these identities may occur in the system $\tau$. As for the identities on both $p$ and $q$ (such as $p(x,y,y) \approx q(x,z,x)$, $p(x,x,y) \approx q(x,z,z)$...), we can notice the following: in algebra $\mathbf{B}$ from example 1 both terms have to be at most binary, but if the system $\tau$ allows that, both terms can be defined in a reduct over $\mathbb{Z}_5$ (as at most binary also). This is impossible assuming that $\tau$ describes omitting types 1 and 2, so $\tau$ includes no identities on $p$ and $q$ with two variables on each side.
\end{itemize}

\vspace{0.1 cm}

\noindent So, if $\tau$ includes identities with more than two variables, they can only have three variables, and be of two kinds:
\begin{itemize}
\item identities on $p$ with $x$, $y$, $z$ on both sides
\item identities on $p$ with $x$, $y$ on one side and $x$, $z$ on another
\end{itemize}
\end{itemize}

\vspace{0.1 cm}

\noindent Now, the system $\tau$ has to hold in algebras $\mathbf{A}$ and $\mathbf{B}$, so if we substitute both $x$ and $y$ for $z$ in $\tau$ we shall obtain a system -- consequence (with more identities, but including only $x$ and $y$) that also holds in these two algebras. Let us denote this new system by $\tau_1$, and discuss what happens in a reduct of a module over a finite ring:
\begin{itemize}
\item identities on $p$ with $x$, $y$, $z$ on both sides
\begin{itemize}
\item if we substitute both $x$ and $y$ for $z$ in the identity $p(x,y,z) \approx p(x,z,y)$, we shall obtain the following two identities: $p(x,y,x) \approx p(x,x,y)$, $p(x,y,y) \approx p(x,y,y)$. The second identity is obviously a trivial one, and the first can hold in a reduct of a module over a finite ring if we define $p$ to be the first projection map, or a term $\alpha x+\beta y+ \beta z$, where $\alpha + 2\beta =1$. In both cases the identity $p(x,y,z) \approx p(x,z,y)$ holds in the same reduct.
\item if we substitute both $x$ and $y$ for $z$ in the identity $p(x,y,z) \approx p(z,x,y)$, we shall obtain the following two identities: $p(x,y,x) \approx p(x,x,y)$, $p(x,y,y) \approx p(y,x,y)$. These two hold in a reduct of a module over a finite ring if we define $p$ to be $\alpha x+\alpha y+ \alpha z$ (of course $3 \alpha = 1$), but this means that the identity $p(x,y,z) \approx p(z,x,y)$ also holds in this reduct.
\item the same holds for the identity $p(x,y,z) \approx p(z,y,x)$-- namely, by substituting $x$ and $y$ for $z$ we obtain two identities, $p(x,y,x) \approx p(x,y,x)$, $p(x,y,y) \approx p(y,y,x)$  . If we define $p$ (in any possible way) so that the two identities obtained hold in some reduct of a module over a finite ring, then the identity $p(x,y,z) \approx p(z,y,x)$ also holds in that reduct for the same term $p$.
\item it is easy to check that the same holds for the identities $p(x,y,z) \approx p(y,z,x)$, $p(x,y,z) \approx p(y,x,z)$.
\end{itemize}
\item identities on $p$ with $x$, $y$ on one side and $x$, $z$ on another
\begin{itemize}
\item it is easy to see that an identity like that does not hold in any reduct of a module over a finite ring if and only if the variable $z$ on the right side of the identity occurs in all the positions where $x$ is on the left (and perhaps in some more), e.g. $p(x,y,x) \approx p(z,x,z)$, $p(x,y,y) \approx p(z,z,x)$... For all of these identities (that do not hold in any reduct) holds the following: if we substitute both $x$ and $y$ for $z$, we shall obtain two identities such that both of them also cannot hold in any reduct (e.g. from the identity $p(x,y,x) \approx p(z,x,z)$ we would obtain the following two $p(x,y,x) \approx x$ , $p(x,y,x) \approx p(y,x,y)$. Now from the first one we have that $p$ would have to be one of the terms $\pi_1$, $\pi_3$, $\alpha x +\beta z$, but the second identity does not allow either of these).
\item according to the previous item, the following holds: if we have an identity on $p$ with $x$, $y$ on one side and $x$, $z$ on another, and by substituting both $x$ and $y$ for $z$ we obtain two identities that hold in some full idempotent reduct over a finite ring (for some term $p$), then the identity we started with (the one including $z$) also holds in that reduct for the same term $p$.
\end{itemize}
\end{itemize}

\vspace{0.1 cm}

\noindent We can now state the following: if the system $\tau_1$ (the system -- consequence,  obtained by substituting both $x$ and $y$ for $z$ in the system $\tau$) holds in some reduct of a module over a finite ring then the system $\tau$ also holds in that reduct. Since we want $\tau$ to describe omitting types 1 and 2, by substituting both $x$ and $y$ for $z$ in $\tau$ we need to  obtain a system with only $x$ and $y$ (i.e. $\tau_1$) that holds in $\mathbf{A}$ and $\mathbf{B}$ and does not hold in any reduct of a module. There are only three systems with only $x$ and $y$ that satisfy this, and these are the systems  \eqref{system2}, \eqref{newone}, \eqref{system3} obtained in the subsections \ref{cases12}, \ref{case3}  . Therefore $\tau_1$ has to contain one of these systems or actually be one of them. In either case, the system $\tau$ with three variables is a stronger condition compared to the obtained system $\tau_1$ with $x$ and $y$ only, so there is no need to consider it. In other words, we have just proved that nothing new can be obtained from systems of identities on $p$ and $q$ including more than two variables.

\vspace{0.3 cm}

\noindent Up to this point we can state the following:
\begin{fakt} If it is possible to describe omitting types 1 and 2 by two ternary terms $p$ and $q$, it can only be done by one or more of the systems \eqref{system2}, \eqref{newone}, \eqref{system3}.
\end{fakt}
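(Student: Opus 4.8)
The plan is to assemble this Fact as the synthesis of the three case analyses that precede it, organized around the behaviour of $p$ and $q$ in algebra $\mathbf{A}$. First I would recall the opening reduction of this section: any describing system $\phi$ must contain at least one identity involving both $p$ and $q$, since otherwise $\phi$ splits into two systems each on a single ternary term, and by subsection \ref{singleternary} neither half can describe omitting types 1 and 2. This lets me assume throughout that a genuine mixed identity is present. Next I would invoke the trichotomy forced by property \ref{ternary term} of Example 2: in $\mathbf{A}$ every ternary idempotent term is a projection or a majority term, so $\phi$ places the pair $(p,q)$ in one of exactly three configurations. The configuration in which both are projections is discarded at once, since then $\phi$ holds in every algebra. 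This leaves the projection/majority configuration (Case 2) and the majority/majority configuration (Case 3), treated in subsections \ref{cases12} and \ref{case3}.

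Then I would carry out the elimination arguments that pin down the survivors. For Case 2 I would start from the full system \eqref{sys}, observe it fails in $\mathbf{B}$, and delete identities systematically, classifying each surviving subsystem as either defeated (it holds in some idempotent reduct over $\mathbb{Z}_5$, hence cannot describe) or victorious (it forces $p$ and $q$ out of every reduct over a finite ring, hence implies omitting types 1 and 2). The output, up to permutation of variables, is precisely \eqref{system2} and \eqref{newone}. Applying the identical procedure to \eqref{system4} in Case 3 yields only \eqref{system3}.

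Finally I would close the gap left by systems using more than two variables. The preceding subsection shows that any admissible many-variable identity in a describing $\tau$ must be an identity on $p$ alone, of one of only two shapes. Substituting both $x$ and $y$ for $z$ throughout produces a two-variable consequence $\tau_1$ which still holds in $\mathbf{A}$ and $\mathbf{B}$. The decisive point, established identity-by-identity, is that whenever $\tau_1$ holds in a module reduct, the original three-variable $\tau$ holds there too; since a describing $\tau$ holds in no reduct, the same is true of $\tau_1$, so $\tau_1$ is a two-variable system living in $\mathbf{A}$ and $\mathbf{B}$ but in no reduct. The only such two-variable systems are the three already found, whence $\tau_1$ contains (or equals) one of them. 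As $\tau$ is at least as strong as $\tau_1$, it yields no genuinely new describing system, and combining all three configurations gives the Fact: the only candidates are one or more of \eqref{system2}, \eqref{newone}, \eqref{system3}.

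The main obstacle is exhaustiveness. I must be certain that the elimination trees for \eqref{sys} and \eqref{system4} are explored completely and that every discarded subsystem genuinely survives in some concrete reduct, with the explicit $\mathbb{Z}_5$ witnesses doing the heavy lifting. Equally delicate is the many-variable reduction, which rests on the nontrivial claim that the two-variable consequence controls the three-variable original; verifying this for every admissible form of identity is where the real care lies, and several of these verifications are deferred to \cite{nnmm}.
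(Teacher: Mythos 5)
Your proposal follows essentially the same route as the paper: the reduction to systems containing a mixed identity on $p$ and $q$, the trichotomy based on how $p$ and $q$ behave in $\mathbf{A}$, the elimination analyses of \eqref{sys} and \eqref{system4} yielding \eqref{system2}, \eqref{newone}, \eqref{system3}, and the substitution of $x$ and $y$ for $z$ to show that three-variable systems reduce to the two-variable ones. The logical structure, including the contrapositive use of the claim that $\tau_1$ holding in a reduct forces $\tau$ to hold there, and the acknowledged reliance on the exhaustive case checks deferred to \cite{nnmm}, matches the paper's own argument.
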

\vspace{1cm}

\noindent We shall now provide an example of a finite algebra that generates a variety  omitting types 1 and 2 but does not realize either of the systems \eqref{newone}, \eqref{system3} (this example is due to Keith Kearnes).\\

\noindent $\mathbf{Example \  3}$\\
 \noindent Suppose the language consists only of a single ternary function symbol, $f(x,y,z)$.

  \vspace{0.1 cm}

  \noindent Let $\mathbf{C}=\langle \ \{\ \!0 \ ,\ 1 \} \ ,\ f^{\bf C}  \  \rangle$  be a 2-element algebra of this language where the basic operation $f^{\bf C}$ is a majority operation, $f^{\bf C}(x,x,y) \approx f^{\bf C}(x,y,x) \approx f^{\bf C}(y,x,x) \approx x $. Each ternary term operation in this algebra is either one of the projection maps or a majority term (this was already explained in the example 1).

   \vspace{0.1 cm}

   \noindent Furthermore, let $\mathbf{D}=\langle \ \{\ \!0 \ ,\ 1 \} \ ,\ f^{\bf D}  \  \rangle$  be a 2-element algebra of the same language where $f^{\bf D}$  is defined like this: $f^{\bf D} (x,y,z)\approx x \land y\land z$,  $\land$ being a semi--lattice meet operation.

    \vspace{0.1 cm}

    \noindent The product algebra $\mathbf{C} \times \mathbf{D}$ generates a variety that omits types 1 and 2 --  if we denote its basic operation by  $f^{{\bf C}\times{\bf D}}$, then the term operations $v(x,y,z) \approx  f^{{\bf C}\times{\bf
D}}(x,y,z)$ and $w(x,y,z,u) \approx f^{{\bf C}\times{\bf D}}(x,y,f^{{\bf C}\times{\bf D}}(x,z,u))$ satisfy the conditions of  Theorem 2.7. Let us now prove that the algebra  $\mathbf{C} \times \mathbf{D}$  realizes the system \eqref{system2}, but does not realize either of the systems \eqref{newone} or \eqref{system3}.
\begin{proof}
As mentioned before, let $f^{{\bf C}\times{\bf D}}$ be the basic operation in $\mathbf{C} \times \mathbf{D}$.\\

\vspace{0.3 cm}

 \noindent The terms $p(x,y,z) = f^{{\bf C}\times{\bf D}}(x,x,f^{{\bf C}\times{\bf D}}(x,y,z))$ and $q(x,y,z) = f^{{\bf C}\times{\bf D}}(x,y,z)$ satisfy the system \eqref{system2}.\\
 \vspace{0.3 cm}

 \noindent In the system \eqref{newone} there is a term $q(x,y,z)$ such that $x \approx q(x,y,x)$ and $q(y,x,x) \approx q(x,x,y)$. If $\mathbf{C} \times \mathbf{D}$ satisfies the identity $x \approx q(x,y,x)$ (i.e. there is such a term operation of this algebra), then algebra $\mathbf{D}$ satisfies the same (for the corresponding term operation $q$, of course). We can notice here that algebra $\mathbf{D}$ is in fact term--equivalent to a semi--lattice, which means  each of its term operations is just a meet of the variables involved. So, if $\mathbf{D}$ satisfies the identity $x \approx q(x,y,x)$, it means the term operation $q$ of $\mathbf{D}$ does not depend on its second variable. This also means that the corresponding term $q$ of the term algebra in question does not depend on its second variable (that is,  $y$ does not occur syntactically in the term $q(x,y,z)$ of the term algebra, for if it did then it would have to occur in the corresponding term operation $q$ of $\mathbf{D}$, since the only operation symbol $f$ of the language is interpreted as meet in $\mathbf{D}$ ). According to this, in algebra $\mathbf{C}$ the term operation  $q$ cannot depend on its second variable, so it has to be either the first or the third  projection map (this majority algebra does not have other binary term operations except for projections, example 1). But, $q$ should also satisfy $q(y,x,x) \approx q(x,x,y)$ in $\mathbf{C} \times \mathbf{D}$, therefore in $\mathbf{C}$, and this identity makes it impossible for $q$ to be either of these two projections. So, we can conclude that $\mathbf{C} \times \mathbf{D}$ does not realize the system \eqref{newone}.\\

 \vspace{0.1 cm}

 \noindent For the system \eqref{system3} holds the similar observation : in \eqref{system3} there is a term $p(x,y,z)$ such that $x \approx p(x,x,y)$ and $p(x,y,x) \approx p(y,x,x)$. If algebra $\mathbf{C} \times \mathbf{D}$ satisfies the identity $x \approx p(x,x,y)$ then  $\mathbf{D}$ satisfies the same, so $p$ cannot depend on its third variable in $\mathbf{D}$, therefore the third variable does not syntactically occur in the corresponding term $p(x,y,z)$ of the term algebra in question. Since the identity also has to hold in $\mathbf{C}$, the term operation $p$ has to be either the first or the second projection map in this algebra, but this is impossible because of the identity $p(x,y,x) \approx p(y,x,x)$. This means the algebra $\mathbf{C} \times \mathbf{D}$ does not realize the system \eqref{system3} either.
 \end{proof}
 \vspace{0.5 cm}
 \noindent We can conclude now that neither of the systems \eqref{newone}, \eqref{system3} characterizes omitting types 1 and 2, and also state a proposition:
 \begin{tvrdjenje}
 If it is possible to describe omitting types 1 and 2 by two ternary terms $p$ and $q$, it can only be done by the system \eqref{system2}.
 \end{tvrdjenje}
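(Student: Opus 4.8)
The plan is to derive the Proposition directly from the two facts already in hand, namely the preceding Fact, which narrows every possible two-ternary-term description down to the three systems \eqref{system2}, \eqref{newone} and \eqref{system3}, together with the explicit computation carried out in Example 3. The essential preliminary is to be precise about the meaning of the phrase ``describes omitting types 1 and 2'': a system $\Sigma$ of linear identities \emph{describes} this property exactly when, for every locally finite variety $\mathcal{V}$, the variety $\mathcal{V}$ omits types 1 and 2 \emph{if and only if} $\mathcal{V}$ realizes $\Sigma$. In particular, a genuine description forces \emph{every} variety omitting the two types to satisfy $\Sigma$, and it is this implication that I would exploit.

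First I would appeal to the preceding Fact to reduce the statement to an elimination problem: if any system on two ternary idempotent terms $p$ and $q$ describes omitting types 1 and 2, then it is one of \eqref{system2}, \eqref{newone}, \eqref{system3} (up to a permutation of variables). It therefore suffices to rule out \eqref{newone} and \eqref{system3}. Next I would invoke Example 3. The algebra $\mathbf{C}\times\mathbf{D}$ generates a variety that omits types 1 and 2, as witnessed by the terms $v$ and $w$ satisfying the hypotheses of Theorem~\ref{th}, yet the computation in Example 3 shows that this variety realizes neither \eqref{newone} nor \eqref{system3}. Consequently, were \eqref{newone} a description, the biconditional would force $\mathbf{C}\times\mathbf{D}$ to realize \eqref{newone}, contradicting Example 3; the same argument disqualifies \eqref{system3}. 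Hence neither system can describe the property, and by the preceding Fact the sole surviving candidate is \eqref{system2}.

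The main difficulty here is conceptual rather than computational, since all of the real work has already been done in the preceding Fact and in Example 3. One must take care that the Proposition asserts only that \eqref{system2} is the \emph{unique candidate} among two-ternary-term systems: it does not claim that \eqref{system2} actually characterizes omitting types 1 and 2. Indeed, from the earlier subsections we know only that \eqref{system2} \emph{implies} the property, and whether the converse holds is precisely the question left open. The proof thus rests entirely on the eliminative force of Example 3, combined with the enumeration supplied by the preceding Fact, and requires no further technical machinery.
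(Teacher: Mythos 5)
Your proposal is correct and takes essentially the same route as the paper: the Proposition is obtained there precisely by combining the preceding Fact (which narrows all candidates to \eqref{system2}, \eqref{newone}, \eqref{system3}) with Example 3, whose algebra $\mathbf{C}\times\mathbf{D}$ generates a variety omitting types 1 and 2 yet realizes neither \eqref{newone} nor \eqref{system3}, so only \eqref{system2} survives. Your explicit statement of the biconditional meaning of ``describes'' is exactly the eliminative step the paper leaves implicit.
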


\noindent $\mathbf{Problem:}$ As we have mentioned in the abstract, it is not resolved whether the system \eqref{system2} actually describes omitting unary and affine types. Finding a counterexample for this system (i.e. a finite algebra that does not realize the system but generates a variety omitting types 1 and 2) would lead to conclusion that it is impossible to describe omitting unary and affine types by two ternary terms. On the other hand, perhaps it is possible to prove that any locally finite variety omitting types 1 and 2 realizes the system  \eqref{system2}, which would, of course, mean that it characterizes omitting types 1 and 2.

\vspace{3 cm}
\noindent $\mathbf{Acknowledgements:}$

\noindent I thank to Keith Kearnes for providing me with example 3.\\ I also thank to my Ph.D. adviser Petar Markovi\'c.

\end{document}